\numberwithin{equation}{section}
\numberwithin{figure}{section}
\newtheorem{theorem}{Theorem}[section]
\newtheorem{lemma}[theorem]{Lemma}
\theoremstyle{definition}
\theoremstyle{remark}
\newtheorem*{remark}{Remark}
\newcommand\shorttitle{The fine structure of a simple Markov process}
\newcommand\authors{Andreas Anckar and Göran Högnäs}
\ifodd\value{page}
\shorttitle
\authors
\begin{document}

\title{The fine structure of the stationary distribution for a simple Markov
process\thanks{This manuscript version is made available under the CC-BY-NC-ND 4.0
license: https://creativecommons.org/licenses/by-nc-nd/4.0/ }}

\author{Andreas Anckar\\
Göran Högnäs\\
Åbo Akademi University\\
Fänriksgatan 3, 20500 Turku, Finland}

\date{29.6.2015}
\maketitle
\begin{abstract}
We study the fractal properties of the stationary distrubtion $\pi$
for a simple Markov process on $\mathbb{R}$. We will give bounds
for the Hausdorff dimension of $\pi$, and lower bounds for the multifractal
spectrum of $\pi$. Additionally, we will provide a method for numerically
estimating these bounds.
\end{abstract}

\section{Introduction}

For real numbers $\alpha>1,\beta>0$, we define a Markov process by
\begin{equation}
X_{n+1}=\begin{cases}
X_{n}+\beta & \text{with probability }p\\
\alpha^{-1}X_{n} & \text{with probability }1-p.
\end{cases}\label{eq:markov}
\end{equation}
\begin{figure}
\caption{\label{fig:Xn}Plot of $X_{n}$ when $\alpha=2$, $\beta=1$ and $p=1/3$}

\subfloat[Histogram]{

\includegraphics[width=0.28\paperwidth]{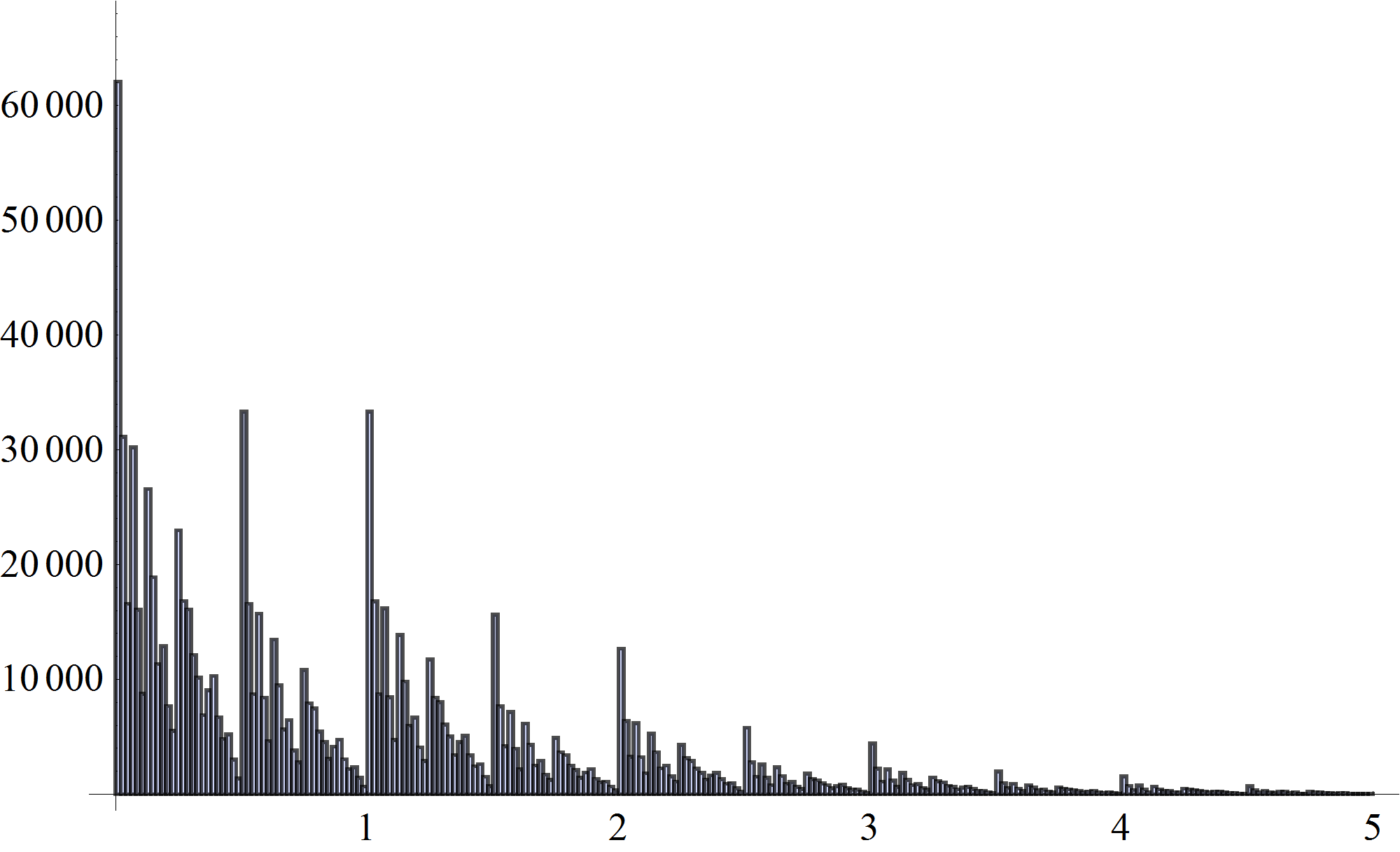}}\subfloat[Empirical mass distribution]{

\includegraphics[width=0.28\paperwidth]{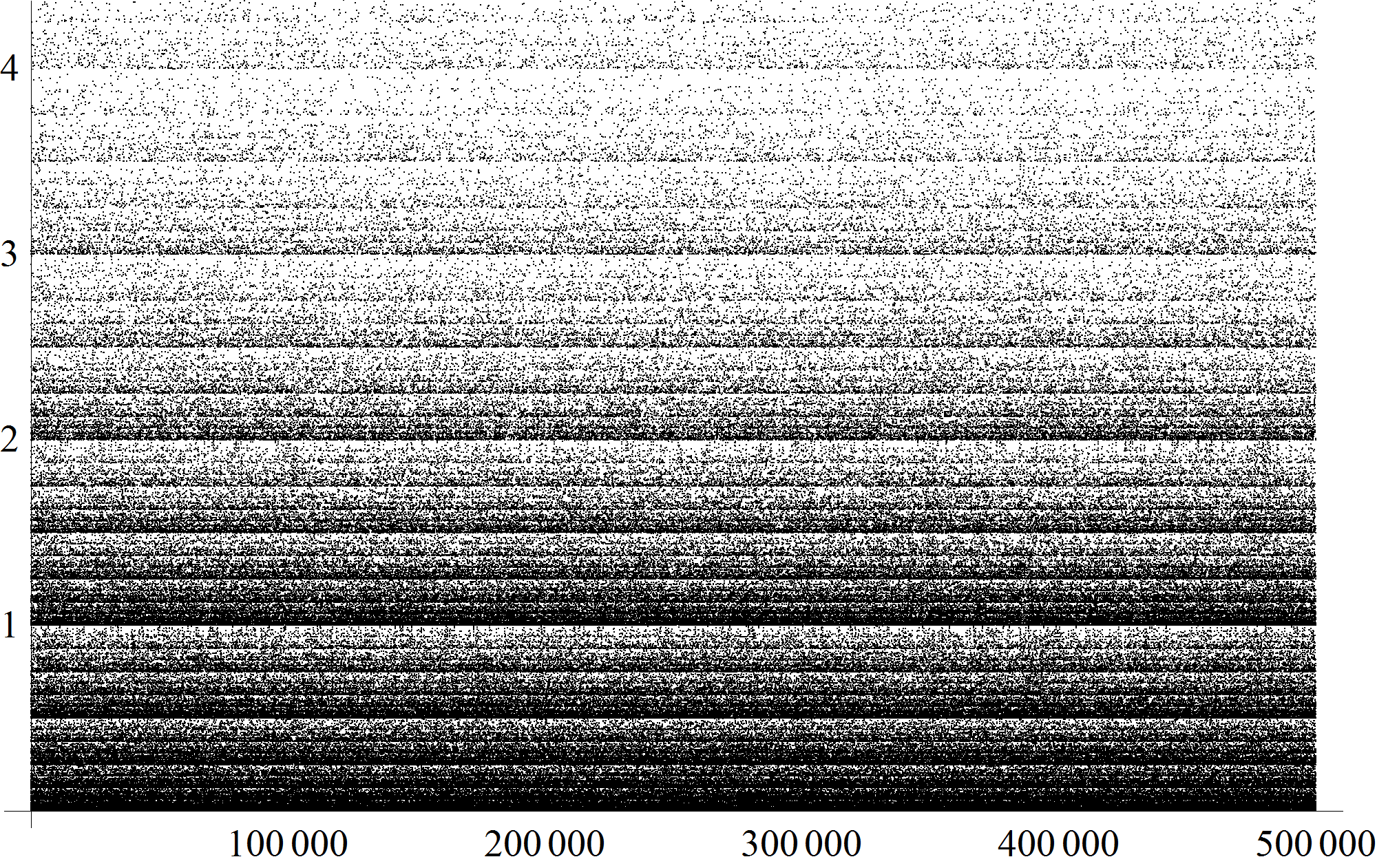}}
\end{figure}
We will denote the stationary distribution of $X_{n}$ by $\pi$.
As figure \ref{fig:Xn} shows, this distribution exhibits typical
fractal patterns. In order to acquire a solid framework in which we
can study the fine structure (ie. Hausdorff dimension and multifractal
spectrum) of $\pi$, we will reformulate the process $X_{n}$ in the
context of iterated function systems. 

A (probabilistic) iterated function system (IFS) is a set $\mathbb{X}\subset\mathbb{R}^{d}$
associated with a family of maps $\mathscr{W}=\left\{ w_{i}\right\} _{i=1}^{N}$,
$w_{i}:\mathbb{X}\rightarrow\mathbb{X}$, where the maps are chosen
independently according to a probability vector $\mathbf{p}=\left\{ p_{i}\right\} _{i\in M}$,
where $p_{i}>0$ for all $i=1,\ldots,N$ and $\sum_{i=1}^{N}p_{i}=1$.
The maps are all Lipschitz, ie. there exist positive constants $\gamma_{i}$
such that $\left|w_{i}(x)-w_{i}(y)\right|\leq\gamma_{i}\left|x-y\right|$
for all $x,y\in\mathbb{X}$ and $i=1,\ldots,N$. If $\gamma_{i}<1$
for all $i$ the IFS is said to be strictly contracting, but a weaker
condition is that $\sum_{i=1}^{N}p_{i}\log\gamma_{i}<0$, in which
case the IFS is said to fulfill average contractivity. In either case
there exists a unique probability measure on $\mathbb{X}$ satisfying
\[
\mu=\sum_{i=1}^{N}p_{i}\mu\circ w_{i}^{-1},
\]
which is called the invariant measure of the IFS (see \cite{Freedman1999}
for a proof). In other terms, if we put $\Sigma=\{1,2,\ldots,N\}$
and let $\mathbb{P}$ be the infinite-fold product probability measure
$\mathbf{p}\times\mathbf{p}\times\cdots$ on $\Sigma^{\infty}$, the
limit 
\begin{equation}
\nu(\mathbf{i})=\lim_{n\rightarrow\infty}w_{i_{1}}\circ w_{i_{2}}\circ\cdots\circ w_{i_{n}}\left(x_{0}\right)\label{eq:nu}
\end{equation}
exists for $\mathbb{P}$-almost every sequence $\mathbf{i}=\left(i_{1},i_{2},\ldots\right)\in\Sigma^{\infty}$,
and does not depend on $x_{0}\in\mathbb{X}$. The mapping $\nu:\Sigma^{\infty}\rightarrow\mathbb{X}$
is thus well defined almost everywhere on $\mathbb{P}$ and $\mu$
can be written as
\[
\mu=\mathbb{P}\circ\nu^{-1}.
\]
Now, let $\Xi(\alpha,\beta,p)$ be the family of IFS's on $\mathbb{R}$
of the form 
\begin{eqnarray*}
w_{1}(x) & = & x+\beta\\
w_{2}(x) & = & \alpha^{-1}x
\end{eqnarray*}
with probability vector $\mathbf{p}=(p,1-p)$, and $\alpha>1,\beta>0$.
The IFS isn't strictly contracting, since $\gamma_{1}=1$, but $\sum p_{i}\log\gamma_{i}=-(1-p)\log\alpha<0$
shows that average contractivity holds. Thus the unique invariant
measure $\mu$ exists and satisfies the recursion relation
\begin{equation}
\mu(A)=p\mu(A-\beta)+(1-p)\mu(\alpha A)\label{eq:recursion}
\end{equation}
for any measurable $A\subset\mathbb{R}$. By writing $X_{n+1}=w_{i_{n+1}}\left(X_{n}\right)$,
where $i_{n}$ is drawn randomly according to $\mathbb{P}$, we see
that the above IFS represents the same random process as the initial
Markov process (\ref{eq:markov}), and $\mu$ is indeed equal to $\pi$.
We will henceforth refer to this measure by $\pi$.

The following notions related to fractal geometry will largely follow
the same definitions as in eg. \cite{Falconer1997}. The notation
$\dim_{H}$ will be used for the Hausdorff dimension of a set. For
any Borel probability measure $\mu$ on $\mathbb{R}$, the lower local
dimension of $\mu$ at $x\in\mathbb{R}$ is defined by 
\begin{equation}
\underline{\dim}\mu(x)=\liminf_{r\rightarrow0}\frac{\log\mu\left(B(x,r)\right)}{\log r}\label{eq:lolocaldim}
\end{equation}
The upper and lower Hausdorff dimensions of $\mu$ are now given by

\begin{eqnarray}
\dim_{H}^{*}\mu & = & \inf\left\{ s:\,\underline{\dim}\mu(x)\leq s\text{ for }\mu\text{-almost all }x\right\} \label{eq:updim}\\
\dim_{H}\mu & = & \sup\left\{ s:\,\underline{\dim}\mu(x)\geq s\text{ for }\mu\text{-almost all }x\right\} ,\label{eq:lodim}
\end{eqnarray}
respectively. Note that $\dim_{H}\mu\leq\dim_{H}^{\star}\mu$. Now
let 
\[
\overline{E}_{t}=\left\{ x\in\mathbb{R}:\,\dim\pi(x)\leq t\right\} ,\;\underline{E}_{t}=\left\{ x\in\mathbb{R}:\,\dim\pi(x)\geq t\right\} ,
\]
and similarly $\overline{f}_{H}(t)=\dim_{H}\overline{E}_{t}$, $\underline{f}_{H}(t)=\dim_{H}\underline{E}_{t}$.
We call the functions $\overline{f}_{H}(t)$ and $\underline{f}_{H}(t)$
the upper and lower multifractal spectrum of $\mu$, respectively.

The Hausdorff dimension of invariant measures of IFS's have been studied
extensively in the last decades. With light conditions on the maps
in $\mathscr{W}$ and only assuming average contractivity, in general
only upper bounds for the Hausdorff dimension of $\mu$ are known
(see eg. \cite{Jordan2008}). The usual way of finding lower bounds
is by trying to limit the overlap of the maps. This is most commonly
done by assuming the open set condition (OSC), which is fulfilled
if there exists an open set $\mathcal{O}\subset\mathbb{X}$ such that
$w_{i}(\mathcal{O})\subset\mathcal{O}$ and $w_{i}(\mathcal{O})\cap w_{j}(\mathcal{O})=\emptyset$
for all $i\neq j$. If this condition fails, there are a few weaker
assumptions that have yielded results (see \cite{Lau2008} for a survey).
In the simple case where the measure has compact support and the maps
in $\mathscr{W}$ are strictly contracting similitudes satisfying
the OSC, the geometry is fully understood. The IFS we study here is
of interest because it does not satisfy the OSC, nor any of the other
overlap conditions. The only known result applicable to our process
is
\[
\dim_{H}^{\star}\pi\leq\frac{p\log p+(1-p)\log(1-p)}{(1-p)\log\alpha^{-1}}.
\]
In theorem \ref{main} we present a strictly smaller upper bound,
and a lower bound as well. We also obtain lower bounds for the multifractal
spectrum.

For any positive integer $b$ and $x\in\mathbb{R}$, let $\delta_{i}^{b}(x)$
denote the $i$:th digit of a base-$b$ expansion of $x/b^{\left\lfloor \log_{b}x\right\rfloor +1}=0.\delta_{1}^{b}\delta_{2}^{b}\ldots\in[0,1]$.
This representation is unique except for points whose expansion ends
in an infinite sequence of $0$'s, since such numbers may also be
written as an expansion ending in an infinite sequence of $(b-1)$'s.
We will ensure uniqueness of $\delta_{i}^{b}(x)$ by always choosing
the former representation in such cases. Write $\tau_{k}^{b}(x,n)$
for the number of occurrences of the digit $k$ in the $n$ first
digits of the base-$b$ expansion of $x$. Whenever it exists, we
denote $\tau_{k}^{b}(x)=\lim_{n\rightarrow\infty}\frac{1}{n}\tau_{k}^{b}(x,n)$.
For any vector $\left(q_{0},q_{1},\ldots,q_{b-1}\right)$ of non-negative
real numbers with $\sum_{i=0}^{b-1}q_{i}=1$ we define 
\[
F_{b}\left(q_{0},q_{1},\ldots,q_{b-1}\right)=\left\{ x\in\mathbb{R}:\,\tau_{k}^{b}(x)=q_{k},\:k=0,1,\ldots,b-1\right\} .
\]
Furthermore, let 
\[
S_{b,t}=\left\{ x\in\mathbb{R}:\,\lim_{n\rightarrow\infty}\frac{1}{n}\sum_{i=1}^{n}\delta_{i}^{b}(x)=t\right\} =\left\{ x\in\mathbb{R}:\,\sum_{i=0}^{b-1}i\tau_{i}^{b}(x)=t\right\} .
\]
By a classical result of Eggleston (\cite{Eggleston1949}), 
\begin{equation}
\dim_{H}\left(F_{b}\left(q_{0},q_{1},\ldots,q_{b-1}\right)\right)=-\sum_{i=\text{1}}^{b-1}q_{i}\log q_{i}/\log b.\label{eq:egg}
\end{equation}
The Hausdorff dimension of the set $S_{b,t}$ is known to be (see
\cite{Barreira2002}, corollary 15)
\begin{equation}
\dim_{H}\left(S_{b,t}\right)=\frac{\log\min\left\{ \sum_{i=0}^{b-1}r^{i-t}:\,r>0,\,\sum_{i=0}^{b-1}(i-t)r^{i}=0\right\} }{\log b}.\label{eq:barrdim}
\end{equation}
For any $\alpha>1$ and $0\leq p\leq1$, we set 
\[
\overline{d}(x)=\frac{\log(1-p)+x\log p}{\log\alpha^{-1}},\quad\underline{d}(x)=\overline{d}(x)-\frac{\log\left(1-p^{\alpha-1}\right)}{\log\alpha^{-1}}
\]
and 
\[
d^{\star}(x)=\min\left\{ \max\left\{ \overline{d}^{-1}(x),0\right\} ,1\right\} ,\;d_{\star}(x)=\min\left\{ \max\left\{ \underline{d}^{-1}(x),0\right\} ,1\right\} 
\]
for all $0\leq x\leq1$. We are now ready to state the main result:

\begin{theorem}\label{main}For an IFS in $\Xi(\alpha,\beta,p)$ where
$\alpha\geq2$ and $\beta\geq1$ are integers we have
\begin{equation}
\dim_{H}^{\star}\pi\leq\frac{\sum_{i=0}^{\alpha-1}\xi_{i}\log\xi_{i}}{\log\alpha^{-1}},\label{eq:mainup}
\end{equation}
where $\xi_{k}=\sum_{m=0}^{\infty}\pi\left[m\alpha+k,m\alpha+k+1\right]$.
Moreover, if $p\leq1/2$ and $\beta=\alpha^{t}$ for some $t=0,1,2,\ldots$,
then 
\begin{equation}
\dim_{H}\pi\geq\underline{d}\left(\sum_{i=0}^{\alpha-1}i\xi_{i}\right)\label{eq:mainlo}
\end{equation}
and 
\begin{equation}
\overline{f}_{H}(t)\geq\dim_{H}\left(S_{\alpha,d^{\star}(t)}\right),\;\underline{f}_{H}(t)\geq\dim_{H}\left(S_{\alpha,d_{\star}(t)}\right).\label{eq:mainmf}
\end{equation}
\end{theorem}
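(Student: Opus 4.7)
The plan is to address the three displayed inequalities in turn, exploiting that $\alpha,\beta\in\mathbb{Z}$ makes the IFS compatible with base-$\alpha$ digit expansions. Iterating (\ref{eq:nu}), for $\beta=\alpha^{t}$ one has the symbolic representation
\[
\nu(\mathbf{i}) = \sum_{j:\,i_j=1}\alpha^{\,t-N_j(\mathbf{i})},
\]
where $N_j(\mathbf{i})$ is the number of $2$'s among $i_1,\ldots,i_{j-1}$. Setting $L_k=|\{j:i_j=1,\,N_j(\mathbf{i})=k\}|$ produces iid random variables, each geometric$(1-p)$ on $\{0,1,2,\ldots\}$; $L_k$ is the ``pre-carry'' base-$\alpha$ digit of $\nu(\mathbf{i})$ at position $k-t$, with a carry triggered precisely when $L_k\geq\alpha$.

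For (\ref{eq:mainup}), I would show that $\pi$-a.e.\ $x$ has base-$\alpha$ digit frequencies $(\xi_0,\ldots,\xi_{\alpha-1})$, so that $\pi$ is supported on $F_\alpha(\xi_0,\ldots,\xi_{\alpha-1})$; then (\ref{eq:updim}) and Eggleston's identity (\ref{eq:egg}) yield the bound. Since $\xi_k$ equals the $\pi$-probability that $\lfloor x\rfloor\equiv k\pmod{\alpha}$ and the scaling $w_2$ translates positional digits into last-integer-digits at a shifted position, the Birkhoff ergodic theorem on the shift $(\Sigma^\infty,\mathbb{P})$ gives $\mathbb{P}$-a.s.\ convergence of digit frequencies to $\xi_k$. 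The only subtlety is that carry cascades from $L_k\geq\alpha$ may change the nominal digit, but such cascades have a geometric tail length and therefore leave Cesaro averages intact.

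For (\ref{eq:mainlo}), assuming $p\leq 1/2$, invoke Billingsley's theorem: $\dim_H\pi$ dominates the $\pi$-essential infimum of $\underline{\dim}\pi(x)$. Bound $\pi(B(x,\alpha^{-n}))$ from above by the $\mathbb{P}$-probability that $(L_k)_{k\leq n+t}$ reproduces the first $n$ base-$\alpha$ digits of $x$ after carry resolution. A direct enumeration, using the geometric law of $L_k$ together with the fact that carry-chains have $O(1)$ expected length once $p^\alpha\leq 1/4$, shows $\pi(B(x,\alpha^{-n}))\leq \alpha^{-n\,\underline{d}(s_n)+o(n)}$ where $s_n=\frac{1}{n}\sum_{k\leq n}(L_k\bmod\alpha)$. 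The strong law identifies $s_n\to\sum_i i\,\xi_i$ on a $\pi$-full set, giving the claimed lower bound.

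For (\ref{eq:mainmf}), the same two-sided cylinder estimate applied pointwise yields
\[
\underline{d}\!\left(\sum_{i=0}^{\alpha-1} i\,\tau_i^\alpha(x)\right) \;\leq\; \underline{\dim}\pi(x) \;\leq\; \overline{d}\!\left(\sum_{i=0}^{\alpha-1} i\,\tau_i^\alpha(x)\right)
\]
on every $x$ for which $t(x):=\sum_i i\,\tau_i^\alpha(x)$ exists. Consequently $\overline{E}_t\supset S_{\alpha,d^\star(t)}$ and $\underline{E}_t\supset S_{\alpha,d_\star(t)}$, and applying the Barreira formula (\ref{eq:barrdim}) to these inclusions gives the stated spectral lower bounds. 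The principal obstacle in the last two steps is rigorously handling carry arithmetic for the unbounded $L_k$; the hypothesis $p\leq 1/2$ is precisely what forces the per-site carry probability to be at most $p^\alpha$, keeping carry-chain lengths geometric so that their contributions are absorbable into $o(n)$ corrections.
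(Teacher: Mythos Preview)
Your overall architecture is right and matches the paper's: Eggleston's formula (\ref{eq:egg}) together with the almost-sure digit frequencies $\tau_k^\alpha(x)=\xi_k$ gives (\ref{eq:mainup}); a two-sided pointwise estimate on $\underline{\dim}\pi(x)$ in terms of the base-$\alpha$ digit average then gives (\ref{eq:mainlo}) and (\ref{eq:mainmf}). Where you diverge from the paper is in how the local-dimension estimate is obtained. The paper never uses the symbolic representation via the iid geometric block lengths $L_k$; instead it bounds $\pi[n\beta\alpha^{-k},(n+1)\beta\alpha^{-k}]$ directly from the self-similarity relation (\ref{eq:recursion}), producing Lemmas~\ref{hf}--\ref{inteq2} and then Lemma~\ref{dimension}. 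That route is insensitive to carries (they are hidden inside the function $g(n,k)$, which the paper then identifies with a partial digit sum), and it also handles the ergodic statement $\tau_k^\alpha=\xi_k$ for any integer $\beta$, not only $\beta=\alpha^t$.

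Your $L_k$ approach could be made to work for the parts that already assume $\beta=\alpha^t$, but two points in the proposal are not yet right. First, the displayed bound $\pi(B(x,\alpha^{-n}))\le \alpha^{-n\,\underline d(s_n)+o(n)}$ with $s_n=\tfrac1n\sum_{k\le n}(L_k\bmod\alpha)$ mixes a random object ($L_k$ lives on $\Sigma^\infty$) into a deterministic upper bound; the quantity that must appear there is $\tfrac1n\sum_{k\le n}\delta_k^\alpha(x)$, the digit average of the fixed point $x$. Second, your assertion that carry effects are absorbable into an $o(n)$ correction is inconsistent with your own target formula: the gap $\overline d-\underline d=\log(1-p^{\alpha-1})/\log\alpha$ is a strictly positive constant, so the multiplicative correction to the cylinder probability is of order $(1-p^{\alpha-1})^{-n}$, i.e.\ order $n$ in the exponent, not $o(n)$. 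In the paper this constant appears transparently as the geometric-series factor in (\ref{eq:fine_sum_bou}); in your framework it would emerge from summing, at each position, over all residues $L_k\equiv d_k-c\pmod\alpha$ compatible with the required post-carry digit, and that per-digit sum contributes a fixed factor, not a vanishing one. If you pursue the $L_k$ route you will need to carry out that summation honestly rather than dismiss it, and to check that the constant you obtain matches (or improves) $\underline d$.
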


\section{Statement of results}

In this section, if not otherwise stated, we will assume that $\pi$
is the invariant measure of an IFS in $\Xi(\alpha,\beta,p)$, where
$p\leq1/2$ and $\alpha\geq2$ and $\beta\geq1$ are integers.

\begin{lemma}\label{hf}For any non-negative $x$ we have
\[
\pi[x,\alpha x]=\frac{p}{1-p}\pi[x-\beta,x].
\]
\end{lemma}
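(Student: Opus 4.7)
The plan is to derive from the recursion (\ref{eq:recursion}) a self-similarity identity $\Phi(x) = \Phi(\alpha x)$ for the auxiliary quantity
\[
\Phi(x) := \pi[x, \alpha x] - \frac{p}{1-p}\,\pi[x - \beta, x],
\]
and then iterate this relation along the geometric sequence $\alpha^N x$, using integrability of $\pi$, to conclude $\Phi \equiv 0$.

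First I would apply (\ref{eq:recursion}) to $A = [x, \alpha x]$ to obtain
\[
\pi[x, \alpha x] = p\,\pi[x - \beta, \alpha x - \beta] + (1-p)\,\pi[\alpha x, \alpha^2 x].
\]
For the middle term I would exploit the fact that, for every $x \geq 0$, both $x$ and $\alpha x - \beta$ lie inside the interval $[x - \beta, \alpha x]$, so additivity of $\pi$ through each of these two points yields (with a half-open-interval convention to absorb any possible atoms of $\pi$)
\[
\pi[x - \beta, \alpha x - \beta] = \pi[x - \beta, x] + \pi[x, \alpha x] - \pi[\alpha x - \beta, \alpha x].
\]
Substituting this back, moving the resulting $p\,\pi[x, \alpha x]$ term to the left, and dividing through by $1 - p$ collapses the recursion to the desired self-similarity $\Phi(x) = \Phi(\alpha x)$.

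Iterating gives $\Phi(x) = \Phi(\alpha^N x)$ for every $N \geq 0$. Both intervals appearing in $\Phi(\alpha^N x)$ are contained in $[\alpha^N x - \beta, \infty)$, whose mass under $\pi$ tends to $0$ as $N \to \infty$ because $\pi$ is a probability measure on $\mathbb{R}$. Hence $\Phi(\alpha^N x) \to 0$ and therefore $\Phi(x) = 0$, which is the stated identity.

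The step that requires the most attention is the interval identity for the middle piece: the two splitting points $x$ and $\alpha x - \beta$ actually switch order at $x = \beta/(\alpha - 1)$, so one should check that additivity gives the same relation in both configurations (it does, because $[x - \beta, \alpha x]$ is the common outer interval in either case) and fix a convention such as half-open intervals to make the identity hold uniformly in $x \geq 0$ regardless of potential atoms of $\pi$. Once this bookkeeping is in place, the remainder of the argument is a one-line telescoping.
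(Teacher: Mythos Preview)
Your argument is correct, but the paper's proof is considerably shorter and takes a different route. Instead of applying the recursion (\ref{eq:recursion}) to $A=[x,\alpha x]$, the paper applies it to the cumulative interval $A=[0,x]$: using that $\pi$ is supported on $[0,\infty)$ one gets $\pi[0,x]=(1-p)\pi[0,\alpha x]+p\pi[0,x-\beta]$, and subtracting this from $\pi[0,\alpha x]$ immediately yields $\pi[x,\alpha x]=p\bigl(\pi[x-\beta,x]+\pi[x,\alpha x]\bigr)$, which rearranges to the claim in one line. No functional equation, no iteration, no limit is needed.

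What your approach buys is that it never invokes the support of $\pi$ on $[0,\infty)$; you only use that $\pi$ is a probability measure (tail decay) to kill $\Phi(\alpha^N x)$. The price is the extra machinery of the self-similarity $\Phi(x)=\Phi(\alpha x)$ and the limiting step. One small point you should patch: your iteration argument needs $\alpha^N x\to\infty$, which fails for $x=0$; that boundary case is trivial (both sides vanish because $\pi(\{0\})=0$ from the recursion), but it should be mentioned separately.
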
\begin{proof}

\begin{eqnarray*}
\pi[x,\alpha x] & = & \pi[0,\alpha x]-\pi[0,x]\\
 & = & \pi[0,\alpha x]-(1-p)\pi[0,\alpha x]-p\pi[0,x-\beta]\\
 & = & p\left(\pi[x-\beta,x]+\pi[x,\alpha x]\right)
\end{eqnarray*}
\end{proof}

\begin{lemma}\label{inteq1}Write $M_{0}=\pi\left[0,\beta\right]$.
There exists a constant $K>1$ such that 
\[
M_{0}p^{n}\leq\pi[n\beta,(n+1)\beta]\leq M_{0}Kp^{n}
\]
for all integers $n\geq0$.\end{lemma}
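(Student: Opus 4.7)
The plan is to handle the two bounds separately. The lower bound is immediate from the recursion $\pi(A)=p\pi(A-\beta)+(1-p)\pi(\alpha A)$: applied to $A=[n\beta,(n+1)\beta]$ it reads
\[
a_n:=\pi[n\beta,(n+1)\beta]=p\,a_{n-1}+(1-p)\sum_{k=\alpha n}^{\alpha(n+1)-1}a_k\ \ge\ p\,a_{n-1},
\]
and iterating gives $a_n\ge p^{n}a_0=M_0p^{n}$.

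For the upper bound I would first combine Lemma \ref{hf} with the recursion to obtain a clean one-sided inequality. Applying Lemma \ref{hf} at $x=\alpha n\beta$ (for $n\ge1$) gives $\sum_{k=\alpha n}^{\alpha^{2}n-1}a_k=\tfrac{p}{1-p}a_{\alpha n-1}$, and since $\alpha(n+1)-1\le\alpha^{2}n-1$ whenever $n\ge1$, this majorises the corresponding sum in the recursion, yielding
\[
a_n\ \le\ p\bigl(a_{n-1}+a_{\alpha n-1}\bigr)\qquad(n\ge1).
\]
Dividing through by $p^{n}$ and writing $c_n:=a_n/p^{n}$, the inequality becomes $c_n-c_{n-1}\le p^{(\alpha-1)n}c_{\alpha n-1}$, and telescoping yields
\[
c_n\ \le\ M_0+\sum_{m=1}^{n}p^{(\alpha-1)m}c_{\alpha m-1}.
\]

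The remaining task is to bound the right-hand side uniformly in $n$, and the strategy is to feed in an a priori polynomial control $c_k=O(k)$. For this I would turn to the moment generating function $\phi(\lambda)=\int e^{\lambda x}d\pi(x)$: the recursion translates into the functional equation $\phi(\lambda)(1-pe^{\lambda\beta})=(1-p)\phi(\lambda/\alpha)$, which iterates to the infinite product $\phi(\lambda)=\prod_{k\ge0}(1-p)/(1-pe^{\lambda\beta/\alpha^{k}})$. Only the $k=0$ factor becomes singular as $\lambda\uparrow\lambda_0:=-\log(p)/\beta$, and isolating its simple pole gives $\phi(\lambda)\le C/(\lambda_0-\lambda)$ near $\lambda_0$. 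A Chernoff bound $a_n\le\pi[n\beta,\infty)\le\phi(\lambda)e^{-\lambda n\beta}$ optimised at $\lambda=\lambda_0-1/(n\beta)$ then produces $a_n\le C'np^{n}$, i.e.\ $c_n\le C'n$. Substituting this into the telescoped inequality and using $\sum_{m\ge1}m\,p^{(\alpha-1)m}=p^{\alpha-1}/(1-p^{\alpha-1})^{2}<\infty$ bounds $c_n$ by $KM_0$ for a constant $K$, which is the required upper estimate.

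The principal obstacle is the pole-isolation step: one has to verify that the tail product $\prod_{k\ge1}(1-p)/(1-pe^{\lambda\beta/\alpha^{k}})$ stays uniformly bounded as $\lambda\uparrow\lambda_0$, which reduces to convergence of $\sum_{k\ge1}\log((1-p)/(1-p^{\,1-1/\alpha^{k}}))$; the summands decay like $\alpha^{-k}$, so the product converges. A purely algebraic bootstrap (inserting a flat bound $c_k\le L$ into the telescoped inequality) closes only under the strict condition $p^{\alpha-1}<1/2$, which fails exactly at $p=1/2,\ \alpha=2$; it is the polynomial slack from the Chernoff estimate that covers this borderline case.
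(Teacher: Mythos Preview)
Your lower bound is identical to the paper's. Your upper bound strategy is sound but follows a genuinely different route, and there is one soft spot worth flagging.

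\textbf{Comparison with the paper.} The paper never introduces the moment generating function. Instead it first proves monotonicity $a_n\le a_{n-1}$ directly from Lemma~\ref{hf} (using $p\le 1/2$, $\alpha\ge 2$), which yields $\pi[n\beta,n\alpha\beta]\ge n(\alpha-1)a_{n\alpha-1}$. Combining this with the recursion and Lemma~\ref{hf} gives the multiplicative inequality
\[
a_n\le p\,a_{n-1}\Bigl(1+\tfrac{p}{(1-p)(\alpha-1)}\cdot\tfrac1n\Bigr),
\]
whose iterate already gives the polynomial slack $a_n\le M_0(n+1)p^{n}$ (via the gamma-function identity for $\prod_k(1+1/k)$). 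A second pass feeds this back into the first-step inequality to produce $a_n\le M_0p^{n}\prod_{k\ge 1}(1+k\alpha p^{k(\alpha-1)})$, a convergent product. So the paper obtains both the $O(np^n)$ bound and the final constant entirely from the recursion, with no analytic input. Your route trades this self-contained combinatorics for the MGF/Chernoff machinery; the payoff is that you avoid the monotonicity step and the somewhat delicate product manipulation, at the cost of importing an external tail estimate.

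\textbf{A gap to watch.} Your passage from the functional equation $\phi(\lambda)(1-pe^{\lambda\beta})=(1-p)\phi(\lambda/\alpha)$ to the infinite product presupposes that $\phi(\lambda)<\infty$ for $0<\lambda<\lambda_0$; the equation in $[0,\infty]$ is consistent with $\phi\equiv\infty$ on $(0,\infty)$, so iteration alone does not establish finiteness. The clean fix is to use the explicit representation $\nu(\mathbf{i})=\beta\sum_{j\ge 1}N_j\alpha^{-(j-1)}$ with $N_j$ i.i.d.\ geometric$(1-p)$ (read off from the IFS), which gives the product formula \emph{and} its finiteness for $\lambda<\lambda_0$ in one stroke. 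Once that is in place, your pole isolation and Chernoff optimisation at $\lambda=\lambda_0-1/(n\beta)$ correctly deliver $a_n\le C' n p^{n}$, and the telescoped inequality then closes exactly as you describe. You correctly identify that a flat bootstrap fails at $p=1/2$, $\alpha=2$; the paper's argument handles that boundary case for the same reason yours does, namely the extra factor of $n$ absorbed by $\sum_m m\,p^{(\alpha-1)m}<\infty$.
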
\begin{proof}Assume that $n\geq1$
(If $n=0$, the proposition holds for any $K\geq1$). The lower bound
follows immediately from (\ref{eq:recursion}) since
\[
\pi[n\beta,(n+1)\beta]\geq p\pi[(n-1)\beta,n\beta]\geq p^{2}\pi[(n-2)\beta,(n-1)\beta]\geq\ldots\geq p^{n}M_{0}
\]
For the upper bound, we first use lemma \ref{hf} and the facts that
$\alpha\geq2$ and $p\leq1/2$ to note that 
\[
\pi[n\beta,(n+1)\beta]\leq\pi[n\beta,n\alpha\beta]=\frac{p}{1-p}\pi[(n-1)\beta,n\beta]\leq\pi[(n-1)\beta,n\beta].
\]
This implies that $\pi\left[m\beta,n\beta\right]\geq(n-m)\pi\left[(n-1)\beta,n\beta\right]$
for any integers $n>m\geq0$. Thus 
\[
\frac{p}{1-p}\pi[(n-1)\beta,n\beta]=\pi[n\beta,n\alpha\beta]\geq n(\alpha-1)\pi[(n\alpha-1)\beta,n\alpha\beta].
\]
The above and lemma \ref{hf} give
\begin{eqnarray}
\pi[n\beta,(n+1)\beta] & \leq & p\pi[(n-1)\beta,n\beta]+(1-p)\pi[n\alpha\beta,n\alpha^{2}\beta]\nonumber \\
 & = & p\left(\pi[(n-1)\beta,n\beta]+p\pi[(n\alpha-1)\beta,n\alpha\beta]\right)\label{eq:upbou1}\\
 & \leq & p\pi[(n-1)\beta,n\beta]\left(1+\frac{p}{1-p}\cdot\frac{1}{n(\alpha-1)}\right).\label{eq:upbou2}
\end{eqnarray}
For any integers $n\geq m\geq1$, let $P(x,m,n)=\prod_{k=n-m+1}^{n}\left(1+x/k\right)$.
By writing $a=\frac{p}{(1-p)(\alpha-1)}$ and repeating (\ref{eq:upbou2})
we get 
\begin{eqnarray}
\pi[n\beta,(n+1)\beta] & \leq & p^{m}\pi[(n-m)\beta,(n-m+1)\beta]P(a,m,n)\label{eq:upbou3}
\end{eqnarray}
Now, apply (\ref{eq:upbou3}) to the second term in (\ref{eq:upbou1})
to see that 
\begin{eqnarray*}
\pi[n\beta,(n+1)\beta] & \leq & p\pi[(n-1)\beta,n\beta]\left(1+p^{n(\alpha-1)}P\left(a,n(\alpha-1),n\alpha-1\right)\right)
\end{eqnarray*}
A standard result is that $P(x,n,n)=\Gamma(x+n)/\Gamma(x)$ (where
$\Gamma(x)$ denotes the gamma function), which implies $P(1,n,n)=n+1$.
Since $a\leq1$ we have $P\left(a,n(\alpha-1),n\alpha-1\right)<P(1,n,n\alpha-1)=n\alpha$.
Thus we arrive at
\begin{eqnarray*}
\pi[n\beta,(n+1)\beta] & < & p^{n}M_{0}\prod_{k=1}^{n}\left(1+k\alpha p^{k(\alpha-1)}\right)<p^{n}M_{0}\prod_{k=1}^{\infty}\left(1+k\alpha p^{k(\alpha-1)}\right).
\end{eqnarray*}
The infinite product above converges if and only if the series $\sum_{k=1}^{\infty}k\alpha p^{k(\alpha-1)}$
converges, which is clearly the case.

\end{proof}

\begin{lemma}\label{intexp}For any integers $n$ and $k$, 
\[
\pi\left[\frac{n\beta}{\alpha^{k}},\frac{(n+1)\beta}{\alpha^{k}}\right]=(1-p)\sum_{k=0}^{\left\lfloor n\alpha^{-k}\right\rfloor }p^{k}\pi\left[\frac{n\beta}{\alpha^{k-1}}-k\alpha\beta,\frac{(n+1)\beta}{\alpha^{k-1}}-k\alpha\beta\right].
\]
\end{lemma}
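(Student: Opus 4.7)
The identity should be read with the inner summation index renamed (say to $j$) to avoid its clash with the outer index $k$; with that convention, the right-hand side is $(1-p)\sum_{j=0}^{\lfloor n/\alpha^{k}\rfloor}p^{j}\pi(\alpha(A-j\beta))$, where $A=[n\beta/\alpha^{k},(n+1)\beta/\alpha^{k}]$, since $\alpha(A-j\beta)=[n\beta/\alpha^{k-1}-j\alpha\beta,(n+1)\beta/\alpha^{k-1}-j\alpha\beta]$. The plan is to unfold the recursion (\ref{eq:recursion}) on $A$ finitely many times until the untreated residual is forced to carry zero mass.

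First, I would prove by induction on $N\ge 0$ that
\[
\pi(A)=p^{N}\pi(A-N\beta)+(1-p)\sum_{j=0}^{N-1}p^{j}\pi(\alpha(A-j\beta)).
\]
The base case $N=0$ is vacuous. For the inductive step, apply (\ref{eq:recursion}) to $A-N\beta$, obtaining $\pi(A-N\beta)=p\pi(A-(N+1)\beta)+(1-p)\pi(\alpha(A-N\beta))$, and substitute: the residual coefficient becomes $p^{N+1}$, and the new term $p^{N}(1-p)\pi(\alpha(A-N\beta))$ extends the geometric sum by exactly the $j=N$ index.

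Next, I would choose $N$ so that the residual vanishes. The upper endpoint of $A-N\beta$ is $(n+1-N\alpha^{k})\beta/\alpha^{k}$, which is $\le 0$ precisely when $N\ge(n+1)/\alpha^{k}$; writing $n=q\alpha^{k}+r$ with $0\le r<\alpha^{k}$, the smallest admissible integer is $N=q+1=\lfloor n/\alpha^{k}\rfloor+1$, and this produces exactly the stated range of summation. There is one boundary subtlety: when $r=\alpha^{k}-1$, the interval $A-N\beta$ still contains the single point $0$. To rule out a contribution from there, I would verify $\pi(\{0\})=0$ by plugging $\{0\}$ into (\ref{eq:recursion}): this yields $\pi(\{0\})=p\pi(\{-\beta\})+(1-p)\pi(\{0\})=(1-p)\pi(\{0\})$, which forces $\pi(\{0\})=0$ since $p<1$.

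None of these steps presents a real obstacle; the iteration is elementary, and the only mild care is in the floor bookkeeping of the cutoff $N$ together with the $\{0\}$-atom check needed to close the boundary case.
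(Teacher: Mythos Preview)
Your argument is correct and follows essentially the same route as the paper: both unfold the recursion \eqref{eq:recursion} repeatedly in the translation variable until the residual interval falls into $(-\infty,0]$ (the paper stops one iteration earlier and then applies \eqref{eq:recursion} once more to the remaining piece in $[0,\beta]$, which amounts to the same thing). One tiny slip: in the atom check you conclude $\pi(\{0\})=0$ from $\pi(\{0\})=(1-p)\pi(\{0\})$ ``since $p<1$'', but the relevant hypothesis is $p>0$; the paper, incidentally, glosses over this boundary point entirely, so your explicit verification is a small improvement.
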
\begin{proof}The formula is straightforward to obtain
using (\ref{eq:recursion}). We have
\begin{eqnarray*}
 &  & \pi\left[\frac{n\beta}{\alpha^{k}},\frac{(n+1)\beta}{\alpha^{k}}\right]\\
 &  & =p\pi\left[\frac{n\beta}{\alpha^{k}}-\beta,\frac{(n+1)\beta}{\alpha^{k}}-\beta\right]+(1-p)\pi\left[\frac{n\beta}{\alpha^{k-1}},\frac{(n+1)\beta}{\alpha^{k-1}}\right].
\end{eqnarray*}
The first term above can be written as
\begin{eqnarray*}
p\pi\left[\frac{n\beta}{\alpha^{k}}-\beta,\frac{(n+1)\beta}{\alpha^{k}}-\beta\right] & = & p^{2}\pi\left[\frac{n\beta}{\alpha^{k}}-2\beta,\frac{(n+1)\beta}{\alpha^{k}}-2\beta\right]+\\
 &  & p(1-p)\pi\left[\frac{n\beta}{\alpha^{k-1}}-\alpha\beta,\frac{(n+1)\beta}{\alpha^{k-1}}-\alpha\beta\right].
\end{eqnarray*}
By repeatedly using (\ref{eq:recursion}) on the first terms, we generally
have
\begin{eqnarray*}
 &  & p^{j}\pi\left[\frac{n\beta}{\alpha^{k}}-j\beta,\frac{(n+1)\beta}{\alpha^{k}}-j\beta\right]=\\
 &  & p^{j+1}\pi\left[\frac{n\beta}{\alpha^{k}}-(j+1)\beta,\frac{(n+1)\beta}{\alpha^{k}}-(j+1)\beta\right]\\
 &  & +p^{j}(1-p)\pi\left[\frac{n\beta}{\alpha^{k-1}}-j\alpha\beta,\frac{(n+1)\beta}{\alpha^{k-1}}-j\alpha\beta\right].
\end{eqnarray*}
Combining everything yields
\begin{eqnarray*}
\pi\left[\frac{n\beta}{\alpha^{k}},\frac{(n+1)\beta}{\alpha^{k}}\right] & = & p^{\left\lfloor n\alpha^{-k}\right\rfloor }\pi\left[\beta\left(\frac{n}{\alpha^{k}}-\left\lfloor \frac{n}{\alpha^{k}}\right\rfloor \right),\beta\left(\frac{n+1}{\alpha^{k}}-\left\lfloor \frac{n}{\alpha^{k}}\right\rfloor \right)\right]+\\
 &  & (1-p)\sum_{j=0}^{\left\lfloor n\alpha^{-k}\right\rfloor -1}p^{j}\pi\left[\frac{n\beta}{\alpha^{k-1}}-j\alpha\beta,\frac{(n+1)\beta}{\alpha^{k-1}}-j\alpha\beta\right].
\end{eqnarray*}
Since $\alpha$ is an integer, we have $\frac{n}{\alpha^{k}}-\left\lfloor \frac{n}{\alpha^{k}}\right\rfloor \geq0$
and $\frac{n+1}{\alpha^{k}}-\left\lfloor \frac{n}{\alpha^{k}}\right\rfloor \leq1$
for all $n,k$. Thus 
\begin{eqnarray*}
 &  & \pi\left[\beta\left(\frac{n}{\alpha^{k}}-\left\lfloor \frac{n}{\alpha^{k}}\right\rfloor \right),\beta\left(\frac{n+1}{\alpha^{k}}-\left\lfloor \frac{n}{\alpha^{k}}\right\rfloor \right)\right]\\
 &  & =(1-p)\pi\left[\frac{n\beta}{\alpha^{k-1}}-\left\lfloor \frac{n}{\alpha^{k}}\right\rfloor \alpha\beta,\frac{(n+1)\beta}{\alpha^{k-1}}-\left\lfloor \frac{n}{\alpha^{k}}\right\rfloor \alpha\beta\right]
\end{eqnarray*}
whereby the proposition follows.

\end{proof}

\begin{lemma}\label{inteq2}For all integers $n,k\geq0$, define
\[
g(n,k)=n\alpha+(1-\alpha)\sum_{i=0}^{k}\left\lfloor n\alpha^{-i}\right\rfloor .
\]
Let $n\geq0$ be arbitrary. Then, for all $k\geq0$,
\[
M_{0}(1-p)^{k}p^{g(n,k)}\leq\pi\left[\frac{n\beta}{\alpha^{k}},\frac{(n+1)\beta}{\alpha^{k}}\right]\leq M_{0}K\left(\frac{1-p}{1-p^{\alpha-1}}\right)^{k}p^{g(n,k)}.
\]
\end{lemma}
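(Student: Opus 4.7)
The plan is induction on $k$, built around Lemmas \ref{inteq1} and \ref{intexp}. The base case $k=0$ is immediate since $g(n,0)=n\alpha+(1-\alpha)n=n$, and the bounds then reduce exactly to the content of Lemma \ref{inteq1}. Assume therefore that the bounds hold at level $k-1$ for every non-negative integer.

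For the inductive step I apply Lemma \ref{intexp} to write
\[
\pi\!\left[\frac{n\beta}{\alpha^{k}},\frac{(n+1)\beta}{\alpha^{k}}\right]=(1-p)\sum_{j=0}^{N}p^{j}\,\pi\!\left[\frac{(n-j\alpha^{k})\beta}{\alpha^{k-1}},\frac{(n+1-j\alpha^{k})\beta}{\alpha^{k-1}}\right],
\]
where $N=\lfloor n\alpha^{-k}\rfloor$. Each summand is now a level-$(k-1)$ quantity attached to the index $m=n-j\alpha^{k}\geq 0$, so the inductive hypothesis applies term by term.

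The key algebraic step is to convert $g(m,k-1)$ into an expression involving $g(n,k)$. Because $\alpha$ is an integer, $\lfloor(n-j\alpha^{k})\alpha^{-i}\rfloor=\lfloor n\alpha^{-i}\rfloor-j\alpha^{k-i}$ for $0\leq i\leq k-1$, and evaluating the geometric tail $\sum_{i=0}^{k-1}\alpha^{k-i}=(\alpha^{k+1}-\alpha)/(\alpha-1)$ yields the identity
\[
g(n-j\alpha^{k},k-1)=g(n,k)+(\alpha-1)(N-j).
\]
Combined with the prefactor $p^{j}$ in the sum, this gives $p^{j}\,p^{g(m,k-1)}=p^{g(n,k)}\,p^{(\alpha-1)(N-j)}$, so the factor $p^{g(n,k)}$ pulls out and the inner sum collapses to the geometric partial sum $\sum_{l=0}^{N}p^{(\alpha-1)l}$.

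Both inequalities then follow at once. For the lower bound it suffices that this geometric sum is bounded below by its $l=0$ term, giving $\pi[\,\cdot\,]\geq M_{0}(1-p)^{k}p^{g(n,k)}$. For the upper bound I use $\sum_{l=0}^{N}p^{(\alpha-1)l}\leq (1-p^{\alpha-1})^{-1}$, which supplies exactly the extra factor required to promote the inductive $(k-1)$-th power of $(1-p)/(1-p^{\alpha-1})$ to the $k$-th. The main obstacle is purely the index bookkeeping that establishes the $g$-identity above; once that is in hand, the rest is routine geometric estimation.
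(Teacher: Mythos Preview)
Your argument is essentially the paper's own proof: induction on $k$ with Lemma~\ref{inteq1} as the base case, Lemma~\ref{intexp} for the recursion, a reduction of $g(n-j\alpha^{k},k-1)$ to an expression involving $g(n,k)$, and then a two-sided geometric estimate on the resulting sum. One small slip: the displayed identity should read
\[
g(n-j\alpha^{k},k-1)=g(n,k)+(\alpha-1)(N-j)-j
\]
(equivalently $g(n,k)+(\alpha-1)N-j\alpha$, as in the paper); the missing $-j$ is exactly what your prefactor $p^{j}$ cancels, so your next line $p^{j}p^{g(m,k-1)}=p^{g(n,k)}p^{(\alpha-1)(N-j)}$ is in fact correct and the remainder of the argument goes through unchanged.
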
\begin{proof}The proposition holds for $k=0$ by lemma
\ref{inteq1}. Assume that it holds for $k=t-1$, for some $t>1$.
Then, by lemma \ref{intexp},
\begin{eqnarray*}
\pi\left[\frac{n\beta}{\alpha^{t}},\frac{(n+1)\beta}{\alpha^{t}}\right] & = & (1-p)\sum_{j=0}^{\left\lfloor n\alpha^{-t}\right\rfloor }p^{j}\pi\left[\frac{\left(n-j\alpha^{t}\right)\beta}{\alpha^{t-1}},\frac{\left(n+1-j\alpha^{t}\right)\beta}{\alpha^{t-1}}\right]\\
 & \leq & M_{0}K\frac{(1-p)^{t}}{\left(1-p^{\alpha-1}\right)^{t-1}}\sum_{j=0}^{\left\lfloor n\alpha^{-t}\right\rfloor }p^{j}\cdot p^{g\left(n-j\alpha^{t},t-1\right)}.
\end{eqnarray*}
Notice that 
\begin{eqnarray*}
g\left(n-j\alpha^{t},t-1\right) & = & \left(n-j\alpha^{t}\right)\alpha+(1-\alpha)\sum_{m=0}^{t-1}\left\lfloor \frac{n-j\alpha^{t}}{\alpha^{m}}\right\rfloor \\
 & = & g(n,t-1)+j\left(-\alpha^{t+1}-(1-\alpha)\sum_{m=0}^{t-1}\alpha^{t-m}\right)\\
 & = & g(n,t-1)-j\alpha.
\end{eqnarray*}
Thus
\[
\pi\left[\frac{n\beta}{\alpha^{t}},\frac{(n+1)\beta}{\alpha^{t}}\right]\leq M_{0}K\frac{(1-p)^{t}}{\left(1-p^{\alpha-1}\right)^{t-1}}p^{g(n,t-1)}\sum_{j=0}^{\left\lfloor n\alpha^{-t}\right\rfloor }p^{j(1-\alpha)}
\]
Now, since
\begin{equation}
p^{(1-\alpha)\left\lfloor n\alpha^{-t}\right\rfloor }\leq\sum_{j=0}^{\left\lfloor n\alpha^{-t}\right\rfloor }p^{j(1-\alpha)}\leq\frac{p^{(1-\alpha)\left\lfloor n\alpha^{-t}\right\rfloor }}{1-p^{\alpha-1}},\label{eq:fine_sum_bou}
\end{equation}
and $g(n,t-1)+(1-\alpha)\left\lfloor n\alpha^{-t}\right\rfloor =g(n,t)$,
we have
\[
\pi\left[\frac{n\beta}{\alpha^{t}},\frac{(n+1)\beta}{\alpha^{t}}\right]\leq M_{0}K\left(\frac{1-p}{1-p^{\alpha-1}}\right)^{t}p^{g(n,t)}
\]
For the lower bound, we use a practically identical calculation and
the lower bound in (\ref{eq:fine_sum_bou}) to obtain 
\begin{eqnarray*}
\pi\left[\frac{n\beta}{\alpha^{t}},\frac{(n+1)\beta}{\alpha^{t}}\right] & \geq & M_{0}(1-p)^{t}p^{g(n,t-1)}\sum_{j=0}^{\left\lfloor n\alpha^{-t}\right\rfloor }p^{j(1-\alpha)}\\
 & \geq & M_{0}(1-p)^{t}p^{g(n,t)}.
\end{eqnarray*}
\end{proof}\begin{lemma}\label{dimension}Let $x\in D_{\alpha}$.
For any integer $b\geq2$, define
\[
\underline{\sigma}^{b}(x)=\liminf_{n\rightarrow\infty}\frac{1}{n}\sum_{i=1}^{n}\delta_{i}^{b}(x).
\]
Then
\begin{eqnarray*}
\underline{d}\left(\underline{\sigma}^{\alpha}(x/\beta)\right) & \leq & \underline{\dim}\pi(x)\leq\overline{d}\left(\underline{\sigma}^{\alpha}(x/\beta)\right).
\end{eqnarray*}
\end{lemma}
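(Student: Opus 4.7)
Set $y:=x/\beta$, $L:=\lfloor\log_{\alpha}y\rfloor$, $r_{k}:=\beta/\alpha^{k}$ and $n_{k}:=\lfloor y\alpha^{k}\rfloor$, so that $x$ lies in the basic interval $I_{k}:=[n_{k}r_{k},(n_{k}+1)r_{k}]$ of length $r_{k}$. The plan is to apply Lemma~\ref{inteq2} at the geometric scale $r_{k}$ and then relate $g(n_{k},k)$ to the base-$\alpha$ digits of $y$. As a preliminary step, I would telescope the identity $\lfloor n/\alpha^{i}\rfloor=\alpha\lfloor n/\alpha^{i+1}\rfloor+a_{i}^{(n)}$ (with $a_{i}^{(n)}$ the $i$-th base-$\alpha$ digit of $n$) to rewrite the definition of $g(n,k)$ as
\[
g(n,k)=\alpha\lfloor n/\alpha^{k+1}\rfloor+S_{k}(n),
\]
where $S_{k}(n)$ is the sum of the $k+1$ lowest base-$\alpha$ digits of $n$. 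Since the digits of $n_{k}$ are, up to leading-zero padding, exactly $\delta_{1}^{\alpha}(y),\ldots,\delta_{L+k+1}^{\alpha}(y)$, we get $S_{k}(n_{k})=\sum_{i=1}^{k}\delta_{i}^{\alpha}(y)+O(1)$ and $\alpha\lfloor n_{k}/\alpha^{k+1}\rfloor=O(1)$, whence $\liminf_{k}g(n_{k},k)/k=\underline{\sigma}^{\alpha}(y)$. Since $r_{k}/r_{k+1}=\alpha$ is bounded, the liminf defining $\underline{\dim}\pi(x)$ may be computed along the subsequence $r=r_{k}$.

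For the upper bound $\underline{\dim}\pi(x)\leq\overline{d}(\underline{\sigma}^{\alpha}(y))$ I would observe that $B(x,r_{k})\supseteq I_{k}$ (since $|I_{k}|=r_{k}$ and $x\in I_{k}$), so the lower estimate in Lemma~\ref{inteq2} yields $\pi(B(x,r_{k}))\geq M_{0}(1-p)^{k}p^{g(n_{k},k)}$. Dividing by $\log r_{k}=-k\log\alpha+O(1)$, taking the liminf, and using that $\overline{d}$ is continuous and strictly increasing (as $\log p<0$), the bound follows.

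For the lower bound $\underline{d}(\underline{\sigma}^{\alpha}(y))\leq\underline{\dim}\pi(x)$ I would note that $B(x,r_{k})$ is contained in the union of at most three consecutive basic intervals at level $k$; applying the upper estimate in Lemma~\ref{inteq2} to each gives
\[
\pi(B(x,r_{k}))\leq 3M_{0}K\Bigl(\tfrac{1-p}{1-p^{\alpha-1}}\Bigr)^{k}p^{g_{\min}(k)},\qquad g_{\min}(k):=\min_{j\in\{-1,0,1\}}g(n_{k}+j,k).
\]
Dividing logarithms and passing to the liminf leads to $\underline{d}(\liminf_{k}g_{\min}(k)/k)$. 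The crux will be to verify that $\liminf_{k}g_{\min}(k)/k=\underline{\sigma}^{\alpha}(y)$; from the closed form above, $g(n_{k}+1,k)-g(n_{k},k)=1-t_{k}(\alpha-1)$ and $g(n_{k}-1,k)-g(n_{k},k)=s_{k}(\alpha-1)-1$, where $t_{k}$ and $s_{k}$ denote the lengths of the trailing block of $(\alpha-1)$-digits, respectively $0$-digits, of $n_{k}$ in base $\alpha$. A long trailing block of $(\alpha-1)$'s in $n_{k}$ can therefore make $g_{\min}(k)$ much smaller than $g(n_{k},k)$, and the assumption $x\in D_{\alpha}$ is precisely what rules this out by forcing $t_{k}=o(k)$; granted this, monotonicity and continuity of $\underline{d}$ close the argument.
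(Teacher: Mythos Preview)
Your plan is correct and follows the paper's route: rewrite $g(n,k)$ as a base-$\alpha$ digit sum plus a bounded floor term, sandwich $B(x,r_k)$ between a single basic interval (from inside) and three consecutive ones (from outside), apply Lemma~\ref{inteq2} at each scale, and pass to the $\liminf$ using that $r_k/r_{k+1}=\alpha$ is bounded. The only point where you diverge is the bookkeeping for the neighbouring intervals in the lower bound: the paper selects the maximiser $\bar x_k\in\{x_k-1,x_k,x_k+1\}$ and asserts that $|x-\bar x_k\beta\alpha^{-k}|<2\beta\alpha^{-k}$ forces the first $k-O(1)$ base-$\alpha$ digits of $x/\beta$ and of $\bar x_k$ to coincide, whereas you compute $g(n_k\pm1,k)-g(n_k,k)$ directly, observe that the $n_k-1$ neighbour is harmless ($s_k(\alpha-1)-1\ge -1$), and isolate the trailing-$(\alpha-1)$ block length $t_k$ as the sole obstruction. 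Your formulation makes the role of the hypothesis $x\in D_\alpha$ (which the paper never defines) explicit---it must encode $t_k=o(k)$, equivalently a carry-control condition---and in fact the paper's digit-agreement step tacitly needs the same thing; in substance the two arguments coincide.
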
\begin{proof}

First, we remark that for any integer $n\geq0$ the quantity $g(n,k)$
is related to the sum of the digits in the base-$\alpha$ expansion
of $n$. Define $L(x)=\left\lfloor \log_{\alpha}x\right\rfloor +1$,
then 
\begin{eqnarray*}
g(n,k) & = & \sum_{j=0}^{k}\left\lfloor n\alpha^{-j}\right\rfloor -\alpha\left(\sum_{j=0}^{k}\left\lfloor n\alpha^{-j}\right\rfloor -n\right)\\
 & = & \sum_{j=0}^{k}\left\lfloor n\alpha^{-j}\right\rfloor -\alpha\sum_{j=0}^{k-1}\left\lfloor n\alpha^{-j-1}\right\rfloor \\
 & = & \left\lfloor n\alpha^{-k}\right\rfloor +\sum_{i=0}^{k-1}\delta_{L(n)-i}^{\alpha}(n),
\end{eqnarray*}
since $\left\lfloor n\alpha^{-j}\right\rfloor -\alpha\left\lfloor n\alpha^{-j}\right\rfloor =\delta_{L(n)-j}^{\alpha}(n)$.
Now, fix $x\in\mathbb{R}$ and take $\left\{ x_{k}\right\} _{k=0}^{\infty}$
to be the unique sequence of integers satisfying 
\begin{equation}
x\in\left[\frac{x_{k}\beta}{\alpha^{k}},\frac{\left(x_{k}+1\right)\beta}{\alpha^{k}}\right)\label{eq:dim_xprox}
\end{equation}
for every $k\geq0$. Additionally, fix $r\in\left(0,\beta\alpha^{-1}\right]$
and put $k=\max\left\{ k\in\mathbb{N}:\,r\leq\beta\alpha^{-k}\right\} $.
Then $r>\beta\alpha^{-k-1}$ and
\[
\frac{\log\pi\left(B(x,r)\right)}{\log r}\geq\frac{\log\pi\left[\left(x_{k}-1\right)\beta\alpha^{-k},\left(x_{k}+2\right)\beta\alpha^{-k}\right]}{\log\beta\alpha^{-k-1}}.
\]
Define $\overline{x}_{k}$ as the integer in $\left\{ x_{k}-1,x_{k},x_{k}+1\right\} $
for which $\pi\left[\overline{x}_{k}\beta\alpha^{-k},\left(\overline{x}_{k}+1\right)\beta\alpha^{-k}\right]$
is maximized. Then 
\begin{eqnarray*}
\frac{\log\pi\left(B(x,r)\right)}{\log r} & \geq & \frac{\log3\pi\left[\overline{x}_{k}\beta\alpha^{-k},\left(\overline{x}_{k}+1\right)\beta\alpha^{-k}\right]}{\log\beta\alpha^{-k-1}}\\
 & \geq & \frac{\log3M_{0}K+\log\left(\frac{1-p}{1-p^{\alpha-1}}\right)^{k}p^{g\left(\overline{x}_{k},k\right)}}{(k+1)\log\alpha^{-1}+\log\beta},
\end{eqnarray*}
where we applied lemma (\ref{inteq2}) in the second step. Now set
$y_{k}=\overline{x}_{k}\beta\alpha^{-k}$ and $N=\min\left\{ n:\,\beta<\alpha^{n}\right\} $.
By (\ref{eq:dim_xprox}), 
\begin{equation}
\left|x-y_{k}\right|<2\beta\alpha^{-k}\leq\alpha^{N+1-k},\label{eq:dim_dec}
\end{equation}
for $k\geq0$, implying $\delta_{i}\left(x\right)=\delta_{i}\left(y_{k}\right)$
for $1\leq i\leq k^{\prime}$ where $k^{\prime}=k-N-1$. Thus
\[
\sum_{i=1}^{k^{\prime}}\delta_{i}\left(\overline{x}_{k}\right)=\sum_{i=1}^{k^{\prime}}\delta_{i}\left(\frac{\alpha^{k}}{\beta}y_{k}\right)=\sum_{i=1}^{k^{\prime}}\delta_{i}\left(\frac{\alpha^{k}}{\beta}x\right)
\]
for all $k\geq0$, giving 
\begin{eqnarray}
\frac{g\left(\overline{n}_{k},k^{\prime}\right)}{k} & = & \frac{1}{k}\left(\left\lfloor \overline{x}_{k}\alpha^{-k^{\prime}}\right\rfloor +\sum_{i=1}^{k^{\prime}}\delta_{i}\left(\frac{\alpha^{k}}{\beta}x\right)\right).\label{eq:dim_glim}
\end{eqnarray}
Multiplying a number by $\alpha^{k}$ does not affect its digits,
so as $r\rightarrow0$,
\begin{eqnarray*}
\liminf_{r\rightarrow0}\frac{\log\pi\left(B(x,r)\right)}{\log r} & \geq & \liminf_{k\rightarrow\infty}\frac{\log3M_{0}K+k\log\left(\frac{1-p}{1-p^{\alpha-1}}\right)+g\left(\overline{x}_{k},k\right)\log p}{(k+1)\log\alpha^{-1}+\log\beta}\\
 & = & \frac{\log(1-p)-\log\left(1-p^{\alpha-1}\right)+\underline{\sigma}\left(x/\beta\right)\log p}{\log\alpha^{-1}},
\end{eqnarray*}
since $k^{\prime}/k\rightarrow1$. For the upper bound, fix $r$ and
$k$ as before, then
\begin{eqnarray*}
\frac{\log\pi\left(B(x,r)\right)}{\log r} & \leq & \frac{\log\pi\left[x_{k+1}\beta\alpha^{-k-1},\left(x_{k+1}+1\right)\beta\alpha^{-k-1}\right]}{\log\beta\alpha^{-k}}\\
 & \leq & \frac{\log\left(M_{0}(1-p)^{k+1}p^{g\left(x_{k+1},k+1\right)}\right)}{k\log\alpha^{-1}+\log\beta}.
\end{eqnarray*}
As $r$ decreases and $k$ increases, $\left|x-x_{k+1}\beta\alpha^{-k}\right|<\beta\alpha^{-k}$
so (\ref{eq:dim_glim}) holds, whereby 
\begin{eqnarray*}
 &  & \liminf_{r\rightarrow0}\frac{\log\pi\left(B(x,r)\right)}{\log r}\\
 &  & \leq\liminf_{k\rightarrow\infty}\frac{\log M_{0}+(k+1)\log(1-p)+g\left(x_{k+1},k+1\right)\log p}{k\log\alpha^{-1}+\log\beta}\\
 &  & =\frac{\log(1-p)+\underline{\sigma}\left(x/\beta\right)\log p}{\log\alpha^{-1}}.
\end{eqnarray*}
\end{proof}

\begin{lemma}\label{erg}For $\pi$-almost every $x$ we have
\[
\tau_{k}^{\alpha}(x)=\xi_{k}
\]
for $k=0,1,\ldots,\alpha-1$.\end{lemma}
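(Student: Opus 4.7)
The strategy is to code $\pi$-a.e.\ $x$ by an i.i.d.\ sequence of block-lengths, realize the asymptotic base-$\alpha$ digit frequencies of $x$ as a Birkhoff average on the corresponding Bernoulli shift, and identify the limit as $\xi_k$ directly from its definition.

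\emph{Block encoding.} Reparametrize $\mathbf{i}\in\Sigma^\infty$ by block lengths $a_m\in\mathbb{Z}_{\geq 0}$, where $a_m$ is the number of consecutive $w_1$-steps between the $m$-th and $(m+1)$-th occurrence of $w_2$. Then $\mathbf{a}=(a_m)_{m\geq 0}$ is i.i.d.\ with $\mathbb{P}(a_m=k)=(1-p)p^k$, and $\mathbf{i}\mapsto\mathbf{a}$ is a measurable isomorphism $\mathbb{P}$-a.s. Unfolding $\nu(\mathbf{i})$ in the block parametrization yields the clean formula
\[
x=\nu(\mathbf{a})=\beta\sum_{m=0}^\infty a_m\alpha^{-m}.
\]

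\emph{Shift-digit identity.} Write $d_j(x):=\lfloor x/\alpha^j\rfloor\bmod\alpha$ for the base-$\alpha$ digit of $x$ at position $\alpha^j$, and let $\sigma$ denote the shift on $\mathbf{a}$. A direct computation gives
\[
\nu(\sigma\mathbf{a})=\alpha\,\nu(\mathbf{a})-a_0\alpha\beta,
\]
and since $\beta\in\mathbb{Z}$ the subtracted term lies in $\alpha\mathbb{Z}$. Consequently
\[
d_0(\nu(\sigma\mathbf{a}))=\lfloor\alpha\,\nu(\mathbf{a})\rfloor\bmod\alpha=\lfloor\alpha\{\nu(\mathbf{a})\}\rfloor=d_{-1}(\nu(\mathbf{a})),
\]
and iteration yields $d_0(\nu(\sigma^n\mathbf{a}))=d_{-n}(\nu(\mathbf{a}))$ for every $n\geq 0$. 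This identity is the crux of the proof.

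\emph{Birkhoff and conclusion.} The shift $\sigma$ on the i.i.d.\ sequence $\mathbf{a}$ is a Bernoulli shift, hence ergodic. Applied to the bounded function $f_k(\mathbf{a}):=\mathbf{1}_{\{d_0(\nu(\mathbf{a}))=k\}}$, Birkhoff's ergodic theorem gives, for almost every $\mathbf{a}$,
\[
\lim_{N\to\infty}\frac{1}{N}\sum_{n=0}^{N-1}\mathbf{1}_{\{d_{-n}(\nu(\mathbf{a}))=k\}}=\mathbb{P}\bigl(d_0(\nu(\mathbf{a}))=k\bigr)=\pi(\{x:\lfloor x\rfloor\equiv k\pmod{\alpha}\})=\xi_k,
\]
using $\pi=\mathbb{P}\circ\nu^{-1}$ and the fact that $\pi$ is atomless (an easy consequence of (\ref{eq:recursion}): any atom $x^\ast>0$ would by the recursion force $\alpha^n x^\ast$ to be atoms of equal mass for all $n\geq 0$, contradicting $\pi(\mathbb{R})=1$). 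Finally, $\delta_i^\alpha(x)=d_{L(x)-i}(x)$ for $i=1,2,\ldots$; since $L(x)<\infty$ almost surely, the finitely many integer-part digits are asymptotically negligible in $\frac{1}{n}\tau_k^\alpha(x,n)$, and the Birkhoff limit delivers $\tau_k^\alpha(x)=\xi_k$ for $\pi$-a.e.\ $x$. The hardest step is the shift-digit identity, whose simple form depends crucially on the integrality of $\beta$ (so that $\alpha\beta\in\alpha\mathbb{Z}$); everything else is an application of the ergodic theorem for a Bernoulli shift.
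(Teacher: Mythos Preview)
Your argument is correct and takes a genuinely different route from the paper's. The paper works with the forward Markov chain $X_n$, passes to the induced process $Z_n$ at the $w_2$-return times, applies Elton's ergodic theorem after sandwiching the discontinuous indicator $h_k$ between continuous functions, and then transfers the conclusion from the forward dynamics to $\nu(\mathbf{i})$ via the backward process. You instead compute the coding map explicitly as $\nu(\mathbf{a})=\beta\sum_{m\ge 0}a_m\alpha^{-m}$ on the block sequence, observe that $\sigma$ acts on $\nu$ by $x\mapsto \alpha x-a_0\alpha\beta$, and deduce the key identity $d_0(\nu(\sigma^n\mathbf{a}))=d_{-n}(\nu(\mathbf{a}))$; the digit frequencies then drop out of the classical Birkhoff theorem for a Bernoulli shift on i.i.d.\ geometric symbols. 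This is shorter and more elementary: it replaces Elton's IFS ergodic theorem by ordinary Birkhoff, avoids the continuous-approximation step entirely, and makes the role of the integrality of $\alpha,\beta$ transparent (it is exactly what makes $a_0\alpha\beta\in\alpha\mathbb{Z}$ so that the subtraction does not disturb $d_0$). The paper's approach, by contrast, stays closer to the Markov dynamics and would adapt more readily if one replaced the i.i.d.\ choice of maps by a Markovian one.

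One minor point: your atomlessness sketch (``$\alpha^n x^\ast$ would be atoms of equal mass'') is not literally what the recursion gives; the clean version is to take $c=\sup_x\pi(\{x\})>0$, note the supremum is attained, and observe from $c=p\,\pi(\{x-\beta\})+(1-p)\,\pi(\{\alpha x\})\le c$ that any maximal atom $x$ forces $\alpha x$ to be maximal as well, yielding infinitely many atoms of mass $c$. This is the only place your write-up needs tightening.
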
\begin{proof}Let $X_{n}$
be as in (\ref{eq:markov}), and write $X_{n}(\mathbf{i})=w_{i_{n}}\circ w_{i_{n-1}}\cdots\circ w_{i_{1}}\left(X_{0}\right)$
for $\mathbf{i}\in\Sigma^{\infty}$. Define $n_{\alpha}\left(X_{n}\right)$
as the number of digits in the base-$\alpha$ expansion of $X_{n}-\left\lfloor X_{n}\right\rfloor $,
and $n_{\alpha}^{\prime}\left(X_{n}\right)$ as the number of digits
in $\left\lfloor X_{n}\right\rfloor $. The number $n_{\alpha}\left(X_{n}\right)$
will equal the number of times the map $w_{2}$ is chosen, so for
$\mathbb{P}$-almost every $\mathbf{i}$, 
\begin{equation}
\lim_{n\rightarrow\infty}\frac{n_{\alpha}\left(X_{n}(\mathbf{i})\right)}{n}=1-p,\label{eq:lln}
\end{equation}
by the law of large numbers. On the other hand, $\left\lfloor X_{n}(\mathbf{i})\right\rfloor $
is at most equal to the number of times $w_{1}$ is chosen, so 
\[
\limsup_{n\rightarrow\infty}\frac{\left\lfloor X_{n}(\mathbf{i})\right\rfloor }{n}\leq p,
\]
$\mathbb{P}$-a.e. It follows that
\[
\limsup_{n\rightarrow\infty}\frac{n_{\alpha}^{\prime}\left(X_{n}(\mathbf{i})\right)}{n_{\alpha}\left(X_{n}(\mathbf{i})\right)}\leq\lim_{n\rightarrow\infty}\frac{\left\lfloor \log_{\alpha}np\right\rfloor +1}{n(1-p)}=0
\]
$\mathbb{P}$-a.e., which shows that the integer part does not contribute
to the asymptotical frequency of digits, i.e. it suffices to analyze
$\tau_{k}^{\alpha}\left(X_{n}-\left\lfloor X_{n}\right\rfloor \right)$. 

Let $Y_{n}=\left(X_{n},i_{n+1}\right)$ and observe that $Y_{n}$
is a Markov chain with state space $\mathbb{X}=[0,\infty)\times\left\{ 1,2\right\} $
and stationary distribution $\pi_{Y}=\pi\times\mathbb{P}$. Set $A=\left\{ (x,2):\,x\in[0,\infty)\right\} $
and let $T^{n}(A)$ denote the $n$:th visit of $Y_{n}$ in $A$.
Since $Y_{n}$ is ergodic, $Z_{n}=Y_{T^{n}(A)}$ is also a Markov
chain, with stationary distribution $\pi_{Z}=\pi_{Y}/\pi_{Y}(A)=\pi_{Y}/(1-p)$.
Now define $h_{k}:\mathbb{X}\rightarrow\left\{ 0,1\right\} $ by 
\[
h_{k}\left(Y_{n}\right)=\begin{cases}
1, & \text{if }\left\lfloor X_{n}\right\rfloor \mod\alpha=k\text{ and }i_{n+1}=2\\
0, & \text{otherwise.}
\end{cases}
\]
Informally, whenever $X_{n+1}$ adds a digit to the $\alpha$-expansion
of $X_{n}-\left\lfloor X_{n}\right\rfloor $, $h_{k}\left(Y_{n+1}\right)$
will equal $1$ if the added digit is $k$. This means that 
\[
\tau_{k}^{\alpha}\left(Z_{1,n}-\left\lfloor Z_{1,n}\right\rfloor ,n\right)=\sum_{i=1}^{n}h_{k}\left(Z_{i}\right),
\]
where $Z_{1,n}$ denotes the first coordinate of $Z_{n}$. While $h_{k}$
is not continuous on $\mathbb{X}$, it is continuous on $\left([0,\infty)\setminus\mathbb{Z}\right)\times\left\{ 1,2\right\} $.
Thus, for any $\epsilon>0$, we can find continuous functions $\overline{h}_{k,\epsilon}$,$\underline{h}_{k,\epsilon}:\mathbb{X}\rightarrow\left[0,1\right]$
such that $\underline{h}_{k,\epsilon}\left(Y_{n}\right)<h_{k}\left(Y_{n}\right)<\overline{h}_{k,\epsilon}\left(Y_{n}\right)$
for all $n\geq0$ and for any $m=0,1,2,\ldots$,
\begin{eqnarray*}
\overline{h}_{k,\epsilon}\left(x,2\right) & = & \begin{cases}
\leq1, & \text{for any }x\in\left[m\alpha+k-\epsilon,m\alpha+k+1+\epsilon\right)\\
0, & \text{otherwise}
\end{cases}\\
\underline{h}_{k,\epsilon}\left(x,2\right) & = & \begin{cases}
1, & \text{for any }x\in\left[m\alpha+k-\epsilon,m\alpha+k+1+\epsilon\right)\\
<1, & \text{otherwise}
\end{cases}
\end{eqnarray*}
Now, by an ergodic theorem of Elton (\cite{Elton1987}), for $f=\overline{h}_{k,\epsilon},\underline{h}_{k,\epsilon}$,
for $\mathbb{P}$-almost every $\mathbf{i}$, 
\begin{equation}
\lim_{n\rightarrow\infty}\frac{1}{n}\sum_{i=1}^{n-1}f\left(Z_{i}(\mathbf{i})\right)=\int fd\pi_{Z},\label{eq:elton}
\end{equation}
for all initial points $Z_{0}\in\mathbb{X}$. Thus, for every $\epsilon>0$,
\begin{eqnarray*}
\limsup_{n\rightarrow\infty}\frac{1}{n}\tau_{k}^{\alpha}\left(Z_{1,n},n\right) & < & \sum_{m=0}^{\infty}\pi\left[m\alpha+k-\epsilon,m\alpha+k+1+\epsilon\right]\\
\liminf_{n\rightarrow\infty}\frac{1}{n}\tau_{k}^{\alpha}\left(Z_{1,n},n\right) & > & \sum_{m=0}^{\infty}\pi\left[m\alpha+k+\epsilon,m\alpha+k+1-\epsilon\right].
\end{eqnarray*}
This means that for $k=0,1,\ldots,\alpha-1$, $\mathbb{P}$-a.e.,
\begin{equation}
\lim_{n\rightarrow\infty}\frac{1}{n}\tau_{k}^{\alpha}\left(Z_{1,n},n\right)=\xi_{k}\label{eq:Zlim}
\end{equation}
where $Z_{1,n}(\mathbf{i})=X_{T^{n}(A)}(\mathbf{i})$. The convergence
is independent of $X_{0}$. Now define the ``backward'' process
$\tilde{X}_{n}(\mathbf{i})=w_{i_{1}}\circ w_{i_{2}}\circ\cdots\circ w_{i_{n}}\left(X_{0}\right)$.
By (\ref{eq:nu}), $\tilde{X}_{n}$ converges $\mathbb{P}$-a.e. to
$\nu\left(\mathbf{i}\right)$, which has distribution $\pi$, since
the distribution of $X_{n}$ (which is the same for $\tilde{X_{n}}$)
converges to $\pi$. Furthermore, (\ref{eq:lln}) must also hold for
$\tilde{X}_{n}$ since $\tilde{X}_{n}$ has the same distribution
as $X_{n}$. As $n_{\alpha}\left(X_{n}(\mathbf{i})\right)\rightarrow\infty$,
(\ref{eq:Zlim}) implies

\[
\frac{\tau_{k}^{\alpha}\left(X_{n}(\mathbf{i}),n_{\alpha}\left(X_{n}(\mathbf{i})\right)\right)}{n_{\alpha}\left(X_{n}(\mathbf{i})\right)}\rightarrow\xi_{k},
\]
$\mathbb{P}$-a.e., and the same claim must again also hold for $\tilde{X_{n}}$.
It follows that $\mathbb{P}$-a.e., $\tau_{k}^{\alpha}\left(\nu(\mathbf{i}),n\right)/n\rightarrow\xi_{k}$,
and the proof is complete.\end{proof}Our main theorem now follows
from the above lemmas:

\begin{proof}[Proof of theorem \ref{main}]Lemma \ref{erg} implies
that $\pi\left(F_{\alpha}\left(\xi_{0},\xi_{1},\ldots,\xi_{\alpha-1}\right)\right)=1$,
so (\ref{eq:mainup}) follows immediately from (\ref{eq:egg}). Now,
assume that $\beta=\alpha^{t}$ for some $t=0,1,\ldots$ Then, for
any $x$, $x/\beta$ will have the same digit expansion as $x$. Thus,
lemmas \ref{erg} and \ref{dimension} together give (\ref{eq:mainlo}).
For the last part, note that for any $x\in S_{\alpha,\overline{d}^{-1}(t)}$,
lemma \ref{dimension} implies $\dim\pi(x)\leq t$ and thus $x\in\overline{E}_{t}$.
An analogous argument shows that $x\in S_{\alpha,\underline{d}^{-1}(t)}$
implies $x\in\underline{E}_{t}$, whereby (\ref{eq:mainmf}) follows.\end{proof}

\begin{remark}\label{packing}If we replace $\liminf$ by $\limsup$
in (\ref{eq:lolocaldim}) and (\ref{eq:updim})-(\ref{eq:lodim}),
we obtain the definitions of the upper and local\emph{ packing} dimensions
of $\mu$, denoted $\dim_{P}\mu$ and $\dim_{P}^{\star}\mu$, respectively.
If $x\in S_{\alpha,y}$ for any $y\in\left[0,1\right]$, the limit
inferior in lemma \ref{dimension} may be dropped in favor of the
ordinary limit. Thus, in the (latter) setting of theorem \ref{main},
\[
\underline{d}\left(\sum_{i=1}^{\alpha-1}i\xi_{i}\right)\leq\dim_{P}\pi\leq\dim_{P}^{\star}\pi\leq\overline{d}\left(\sum_{i=1}^{\alpha-1}i\xi_{i}\right).
\]
\end{remark}

\section{Numerical estimates}

\begin{figure}
\caption{\label{fig:mfs}Lower and upper bounds for $\overline{f}_{H}(t)$,
when $\alpha=5$, $p=1/3$.}

\centering{}\includegraphics[width=0.4\paperwidth]{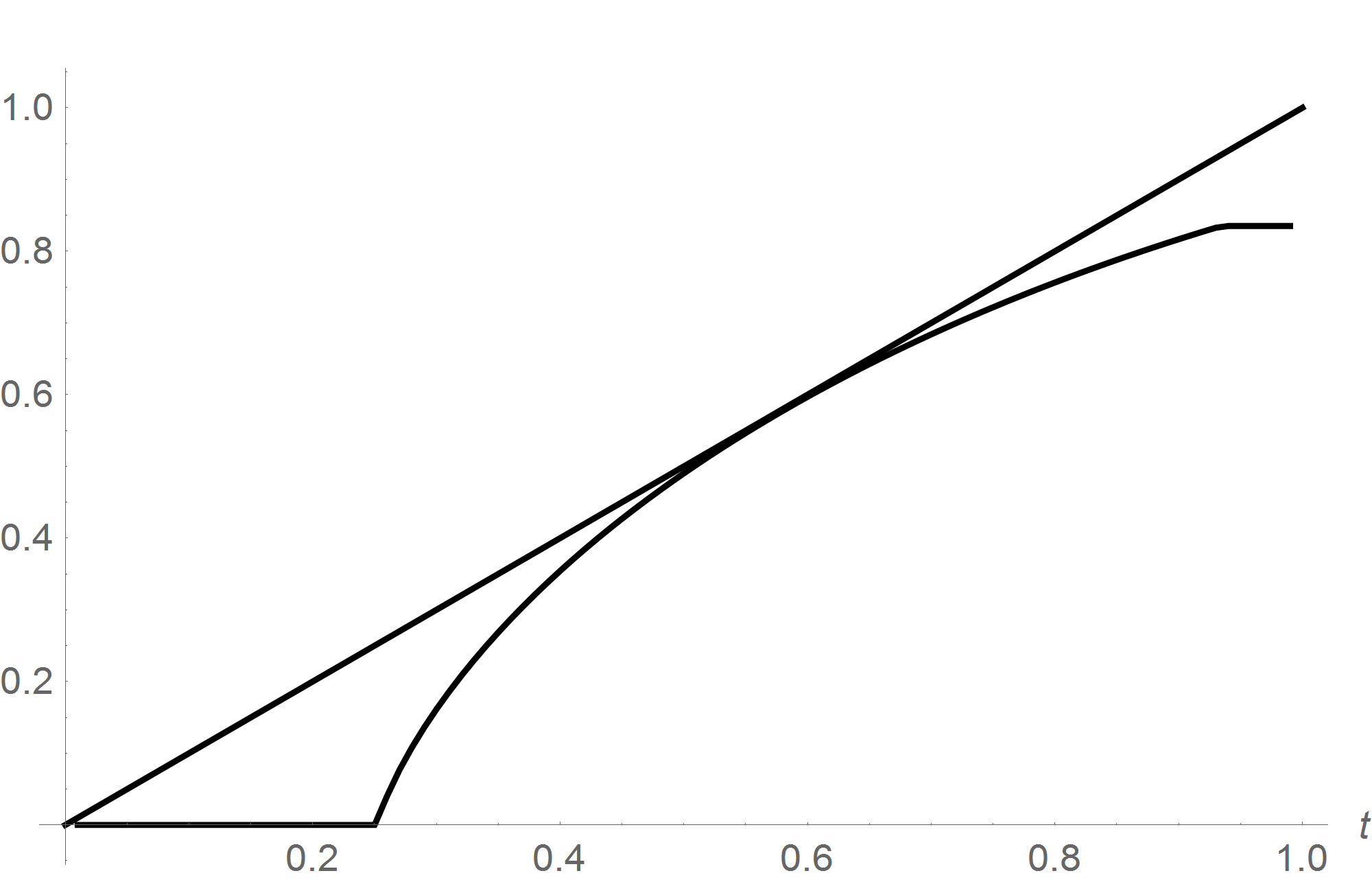}
\end{figure}
When $\beta=1$ we can use the following method to find numerical
approximations of the dimension values in theorem \ref{main}. Since
we only need to evaluate the $\pi$-mass of intervals of unit length,
we partition the state space of $X_{n}$ according to $\left[0,\infty\right)=\bigcup_{i=1}^{\infty}A_{i}$
where $A_{i}=\left[i-1,i\right)$. Now we define a new process $X_{n}^{\prime}$
on $\mathbb{N}$ by the transition probabilities
\[
X_{n+1}^{\prime}=\begin{cases}
X_{n}+1, & \text{with probability }p\\
\left\lfloor X_{n}\alpha^{-1}\right\rfloor , & \text{with probability }1-p
\end{cases}
\]
Note that $X_{n}^{\prime}=m$ whenever $X_{n}\in A_{m}$, since $\alpha$
is an integer and 
\[
\left[m\alpha^{-1},(m+1)\alpha^{-1}\right]\subset\left[\left\lfloor m\alpha^{-1}\right\rfloor ,\left\lfloor m\alpha^{-1}\right\rfloor +1\right]
\]
for all $m\in\mathbb{N}$ (as in lemma \ref{intexp}). The process
$X_{n}^{\prime}$ is called a lumped process of $X_{n}$ (see \cite{Kemeny1976},
section 6.3). Clearly, $X_{n}^{\prime}$ is a Markov process itself,
and it is easily seen that it has stationary distribution $\pi^{\prime}$
defined by $\pi^{\prime}(m)=\pi\left(A_{m}\right)$ for all $m\in\mathbb{N}$. 

We now define the truncated matrix 
\[
P_{n}(i,j)=\begin{cases}
p, & i=j=n\\
P(i,j), & \text{otherwise}
\end{cases}
\]
where the ``missing'' probability is added to the last state to
ensure that the matrix remains stochastic. If we consider the finite
system $\pi_{n}=\pi_{n}P_{n}$, by results of Heyman (\cite{Heyman1991}),
\[
\lim_{n\rightarrow\infty}\pi_{n}(m)=\pi^{\prime}(m)
\]
for all $m\in\mathbb{N}$. This implies that $\lim_{n\rightarrow\infty}\sum_{i=0}^{\infty}\pi_{n}(k+i\alpha+1)=\xi_{k}$,
so by calculating the left eigenvectors of $P_{n}$ for some large
value of $n$ we can find estimates for the dimension of $\pi$ using
theorem \ref{main}. For example, if $\alpha=2$ and $\beta=1$ we
have
\[
P_{5}=\left(\begin{array}{ccccc}
1-p & p & 0 & 0 & 0\\
1-p & 0 & p & 0 & 0\\
0 & 1-p & 0 & p & 0\\
0 & 1-p & 0 & 0 & p\\
0 & 0 & 1-p & 0 & p
\end{array}\right).
\]
Let $p=1/3.$ Now, by calculating the left eigenvectors of $P_{50}$,
we have
\[
0.508\leq\dim_{H}\pi\leq\dim_{H}^{\star}\pi\leq0.906.
\]
The bounds are tighter for larger values of $\alpha$. If we take
$\alpha=5$ instead, we have
\[
0.579\leq\dim_{H}\pi\leq\dim_{H}^{\star}\pi\leq0.585.
\]
In this case, the lower bound to $\overline{f}_{H}(t)$ given by theorem
\ref{main}, along with the upper bound $\overline{f}_{H}(t)\leq t$
(this is standard, see eg. \cite{Falconer1997}) are plotted in figure
(\ref{fig:mfs}). Note that these bounds hold for every $\beta=\alpha^{k}$,
where $k\geq0$ is an integer.

\bibliographystyle{plain}
\bibliography{aku}

\begin{thebibliography}{1}

\bibitem{Barreira2002}
L.~Barreira, B.~Saussol, and J.~Schmeling.
\newblock {Distribution of frequencies of digits via multifractal analysis}.
\newblock {\em Journal of Number Theory}, 97(2):410--438, 2002.

\bibitem{Eggleston1949}
H.~G. Eggleston.
\newblock {The fractional dimension of a set defined by decimal properties}.
\newblock {\em Quarterly Journal of Mathematics}, 20(1):31--36, 1949.

\bibitem{Elton1987}
J~H Elton.
\newblock {An ergodic theorem for iterated maps}.
\newblock {\em Ergodic Theory and Dynamical Systems}, 7:481--488, 1987.

\bibitem{Falconer1997}
Kenneth Falconer.
\newblock {\em {Techniques in Fractal Geometry}}.
\newblock John Wiley \& Sons, 1997.

\bibitem{Freedman1999}
David Freedman and Persi Diaconis.
\newblock {Iterated random functions}.
\newblock {\em SIAM Review}, 41(1):45--76, 1999.

\bibitem{Heyman1991}
Daniel~P. Heyman.
\newblock {Approximating the stationary distribution of an infinite stochastic
  matrix}.
\newblock {\em Journal of Applied Probability}, 28:96--103, 1991.

\bibitem{Jordan2008}
Thomas Jordan and Mark Pollicott.
\newblock {The Hausdorff dimension of measures which contract on average}.
\newblock {\em Discrete and Continuous Dynamical Systems}, 22(9):235--246,
  2008.

\bibitem{Kemeny1976}
John~G. Kemeny and J.~Laurie Snell.
\newblock {\em {Finite Markov Chains}}.
\newblock Springer-Verlag, 1976.

\bibitem{Lau2008}
Ka-Sing Lau, Sze-Man Ngai, and Xiang-Yang Wang.
\newblock {Separation conditions for conformal iterated function systems}.
\newblock {\em Monatshefte f\"{u}r Mathematik}, 156(4):325--355, September
  2008.

\end{thebibliography}

\end{document}